\begin{document}

\setcounter{secnumdepth}{0}
\setcounter{tocdepth}{0}

\newtheorem{definition}{Definition}
\newtheorem{lemma}[definition]{Lemma}
\newtheorem{sublemma}[definition]{Sublemma}
\newtheorem{corollary}[definition]{Corollary}
\newtheorem{proposition}[definition]{Proposition}
\newtheorem{theorem}[definition]{Theorem}

\newtheorem{remark}[definition]{Remark}
\newtheorem{example}[definition]{Example}
\newtheorem{question}[definition]{Question}
\newtheorem{conjecture}[definition]{Conjecture}

\newcommand{\cov}{\mathrm{covol}}
\def \tr{{\mathrm{tr}}}
\def \det{{\mathrm{det}\;}}
\def\co{\colon\tanhinspace}
\def\I{{\mathcal I}}
\def\N{{\mathbb N}}
\def\R{{\mathbb R}}
\def\Z{{\mathbb Z}}
\def\Sph{{\mathbb S}}
\def\Tor{{\mathbb T}}
\def\Disk{{\mathbb D}}
\def\Hess{\mathrm{Hess}}
\def\rad{\mathbf{v}}

\def\H{{\mathbb H}}
\def\RP{{\mathbb R}{\mathrm{P}}}
\def\dS{{\mathrm d}{\mathbb{S}}}
\def\Isom{\mathrm{Isom}}

\def\sh{\mathrm{sinh}\,}
\def\ch{\mathrm{cosh}\,}
\newcommand{\arccosh}{\mathop{\mathrm{arccosh}}\nolimits}
\newcommand{\oh}{\overline{h}}

\renewcommand\b[2]{\langle #1,#2\rangle_-}

\newcommand{\mf}{\mathfrak}
\newcommand{\mb}{\mathbb}
\newcommand{\ol}{\overline}
\newcommand{\la}{\langle}
\newcommand{\ra}{\rangle}
\newcommand{\hess}{\mathrm{Hess}\;}
\newcommand{\grad}{\mathrm{grad}}
\newcommand{\M}{\mathrm{MA}}
\newcommand{\II}{\textsc{I\hspace{-0.05 cm}I}}
\renewcommand{\d}{\mathrm{d}}
\newcommand{\A}{\mathrm{A}}
\renewcommand{\L}{\mathcal{L}}
\newcommand{\FB}[1]{{\color{red}#1}}
\newcommand{\note}[1]{{\color{blue}{\small #1}}}
\newcommand{\ap}{\mathrm{area}^{\scriptscriptstyle +}}
\newcommand{\am}{\mathrm{area}^{\scriptscriptstyle -}}
\renewcommand{\a}{\mathrm{area}}

\renewcommand{\v}{\mathrm{vol}}

\renewcommand{\thefootnote}{\fnsymbol{footnote}}

\setlength{\abovedisplayshortskip}{1pt}
\setlength{\belowdisplayshortskip}{3pt}
\setlength{\abovedisplayskip}{3pt}
\setlength{\belowdisplayskip}{3pt}

\title{A  short elementary proof of reversed Brunn--Minkowski inequality for coconvex bodies}

\author{Fran\c{c}ois Fillastre}
\address{Universit\'e de Cergy-Pontoise, UMR CNRS 8088, F-95000 Cergy-Pontoise, France}
\email{francois.fillastre@u-cergy.fr}
\date{\today}

\maketitle


\footnotetext{
The author thanks Ivan Izmestiev and Rolf Schneider.}   
\footnotetext{
Keywords: coconvex sets, covolume, Brunn--Minkowksi.} 

\begin{abstract}
The theory of coconvex bodies was formalized by A.~Khovanski{\u\i}
and V.~Timorin in \cite{KT}.  It has fascinating relations with the classical theory of convex bodies, as well as applications to Lorentzian geometry. In a recent preprint \cite{schnei2}, R.~Schneider proved a result that implies a reversed Brunn--Minkowski inequality for coconvex bodies, with description of equality case. 
In this note we show that this latter result is an immediate consequence
of a  more general result, namely that the volume of coconvex bodies is strictly convex. This result itself follows from a classical elementary result about the concavity of the volume of convex bodies inscribed in the same cylinder.
\end{abstract}

Let $C$ be a closed convex cone in $\R^n$, with non empty interior, and not containing an entire line. A \emph{$C$-coconvex body} $K$
is a non-empty closed bounded proper subset of $C$ such that $C\setminus K$ is convex. 
The set of $C$-coconvex bodies is stable under positive homotheties. It is also stable 
for the $\oplus$ operation, defined as $K_1\oplus K_2=C\setminus (C\setminus K_1 + C\setminus K_2)$, where $+$ is the Minkowski sum.
The following reversed Brunn--Minkowski theorem is proved in \cite{schnei2} (see \cite{KT} for a partial result). We denote by $V_n$ the volume in $\R^n$.

\begin{theorem}\label{sch}
Let $K_1,K_2$ be $C$-coconvex bodies, and $\lambda \in (0,1)$. Then
$$V_n((1-\lambda)K_1 \oplus \lambda K_2)^{1/n} \leq (1-\lambda)V_n(K_1)^{1/n}+\lambda V_n(K_2)^{1/n}~, $$
and equality holds if and only if $K_1=\alpha K_2$ for some $\alpha >0$.
\end{theorem}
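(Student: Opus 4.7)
Pass to the associated closed convex complements $A_i := \ol{C \setminus K_i}$, each with recession cone $C$; then $\lambda K \leftrightarrow \lambda A$ and $K_1 \oplus K_2 \leftrightarrow A_1 + A_2$. Pick a unit vector $u$ in the interior of the dual cone $C^*$ and a scalar $R$ so large that the compact cap $D_R := \{x \in C : \la x, u\ra \le R\}$ contains $K_1$, $K_2$ and $K_\lambda := (1-\lambda)K_1 \oplus \lambda K_2$. Let $F_R := C \cap \{\la x, u\ra = R\}$ be its top face, and set $L_i := A_i \cap D_R$. These are convex bodies inside $D_R$ containing the common face $F_R$, with $V_n(K_i) = V_n(D_R) - V_n(L_i)$, and similarly for $L_\lambda := \bigl((1-\lambda)A_1 + \lambda A_2\bigr) \cap D_R$.

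Since $D_R$ is convex, the Minkowski combination $(1-\lambda)L_1 + \lambda L_2$ lies in $D_R$, and as $L_i \subset A_i$ it also lies in $(1-\lambda)A_1 + \lambda A_2$, hence in $L_\lambda$. Consequently
\[
V_n(K_\lambda) = V_n(D_R) - V_n(L_\lambda) \le V_n(D_R) - V_n\bigl((1-\lambda)L_1 + \lambda L_2\bigr).
\]
At this point I would invoke the elementary classical lemma announced in the introduction: for convex bodies inscribed in $D_R$ and containing the common face $F_R$, the volume is \emph{affinely} concave,
\[
V_n\bigl((1-\lambda)L_1 + \lambda L_2\bigr) \ge (1-\lambda)V_n(L_1) + \lambda V_n(L_2),
\]
with strict inequality unless $L_1$ and $L_2$ are positively homothetic. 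In the cylinder case this is a one-line argument: parametrize the bodies by the convex functions whose lower graphs they are, observe that the lower graph of the Minkowski combination is pointwise dominated by the convex combination of those functions, and integrate; the cone cap reduces to this via radial coordinates over $F_R$. Combining the two displayed inequalities yields the affine convexity $V_n(K_\lambda) \le (1-\lambda)V_n(K_1) + \lambda V_n(K_2)$ on the cone of $C$-coconvex bodies, independent of $R$.

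The $V_n^{1/n}$ inequality of the theorem is then a formal consequence of this affine convexity combined with the homogeneity $V_n(\lambda K) = \lambda^n V_n(K)$: setting $\alpha_i := V_n(K_i)^{1/n}$, rescaling to unit-volume bodies $\alpha_i^{-1}K_i$, and applying affine convexity at the parameter $\lambda\alpha_2/((1-\lambda)\alpha_1 + \lambda\alpha_2)$ converts affine convexity of an $n$-homogeneous functional into sublinearity of its $(1/n)$-th power. The equality case then propagates through both steps and forces $K_1 = \alpha K_2$ for some $\alpha > 0$. The main obstacle is the concavity lemma on a \emph{truncated cone} rather than a literal cylinder, together with the careful tracking of the equality case through the rescaling.
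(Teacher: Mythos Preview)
Your route is exactly the paper's: truncate by a hyperplane, pass to the complementary caps $L_i=\operatorname{cap}_H(K_i)$, use affine concavity of $V_n$ on these caps to get affine \emph{convexity} of $V_n$ on the $K_i$ (this is the paper's Theorem~\ref{conv}), and then use $n$-homogeneity and your rescaling to deduce convexity of $V_n^{1/n}$ (the paper packages this last step as a separate lemma). Two corrections are needed.

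\textbf{Equality case of the cap lemma.} The equality condition you quote, ``$L_1$ and $L_2$ positively homothetic'', is the one from the genuine Brunn--Minkowski inequality and is wrong here. The relevant elementary lemma (the paper's Lemma~\ref{lem:vol concave}) concerns convex bodies with a \emph{common orthogonal projection} $B$ onto $H$, and its equality case is $L_1=L_2+U$ (or $L_2=L_1+U$) with $U$ a segment orthogonal to $H$. In the coconvex situation the caps share the lateral boundary coming from $\partial C$, which forces $U=\{0\}$, hence $K_1=K_2$; the homothety $K_1=\alpha K_2$ in Theorem~\ref{sch} then appears only at the $n$-th-root rescaling step. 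With ``homothetic'' the propagation you sketch does not actually yield the stated equality case.

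\textbf{The ``truncated cone'' is already a cylinder.} There is no obstacle and no need for radial coordinates. Choose the normal direction $u$ in $\operatorname{int}(C)$ (and far enough so that $B=H\cap C$ is compact; one checks $\operatorname{int}(C)\cap\operatorname{int}(C^*)\neq\emptyset$). Then for every $x\in D_R$ one has $x+su\in C$ for $s\ge 0$, so the orthogonal projection of $D_R$ onto $H$ is exactly $B$; since each $L_i\subset D_R$ contains the top face $F_R=B$, its projection is also $B$. Thus the $L_i$ satisfy the hypotheses of the cylinder lemma verbatim, and your ``one-line argument'' (graphs of convex functions over $B$, compare infimal convolution with pointwise convex combination, integrate) applies directly. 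Your choice $u\in\operatorname{int}(C^*)$ guarantees compactness of $B$ but not this projection property; take $u\in\operatorname{int}(C)\cap\operatorname{int}(C^*)$ instead.
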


\begin{remark}{\rm 
What is actually proved in \cite{schnei2} in the analogous of Theorem~\ref{sch} for \emph{$C$-coconvex sets} instead of $C$-coconvex bodies: 
the set is not required to be bounded but only to have finite
Lebesgue measure. So the result of 
 \cite{schnei2} requires a more involved proof than the one presented here.
}\end{remark}

Actually, we will see that the following result holds.

\begin{theorem}\label{conv}
The volume is strictly  convex on the set of $C$-coconvex bodies.
More precisely, if $K_1,K_2$ are $C$-coconvex bodies, and $\lambda \in (0,1)$, then
$$V_n((1-\lambda)K_1 \oplus \lambda K_2) \leq (1-\lambda)V_n(K_1)+\lambda V_n(K_2)~. $$
Moreover,  equality holds if and only if $K_1=K_2$.
\end{theorem}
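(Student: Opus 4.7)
The plan is to slice $\R^n$ by lines in a fixed interior direction of $C$ and reduce the convexity statement to an elementary pointwise bound on the radial functions of the complementary closed convex sets.

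I would fix $v\in \mathrm{int}\,C$ and a hyperplane $H$ transverse to $v$, so that every $x\in\R^n$ is uniquely written $x=y+tv$ with $y\in H$, $t\in\R$. For a closed convex $L\subseteq C$ with recession cone $C$ (in particular $L_i:=\overline{C\setminus K_i}$), I would define the radial function
$$r_L(y)=\inf\{t\in\R:y+tv\in L\}.$$
Since $v$ lies in the interior of $\mathrm{rec}(L)=C$, each line $y+\R v$ enters $L$ eventually, and convexity of $L$ makes $r_L$ convex on its (convex) domain, with $r_L\ge t_0:=r_C$ because $L\subseteq C$. Slicing $K=C\setminus L$ along these lines gives intervals $[t_0(y),r_L(y))$, so Fubini yields
$$V_n(K)=J\int_H\bigl(r_L(y)-t_0(y)\bigr)\,dy,$$
where $J>0$ is the Jacobian of $(y,t)\mapsto y+tv$ and the non-negative integrand is compactly supported because $K$ is bounded. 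This is the cone-analogue of the classical formula expressing the volume of a convex body inscribed in a cylinder as the integral of the height of its upper boundary.

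The heart of the argument is the pointwise estimate
$$r_{L_\lambda}(y)\le(1-\lambda)r_{L_1}(y)+\lambda r_{L_2}(y),\qquad y\in H,\qquad (\ast)$$
where $L_\lambda:=(1-\lambda)L_1+\lambda L_2$, so that $(1-\lambda)K_1\oplus\lambda K_2=C\setminus L_\lambda$. This is immediate from Minkowski addition: the points $p_i:=y+r_{L_i}(y)v$ lie in $L_i$, so $(1-\lambda)p_1+\lambda p_2=y+\bigl[(1-\lambda)r_{L_1}(y)+\lambda r_{L_2}(y)\bigr]v\in L_\lambda$. Subtracting $t_0(y)=(1-\lambda)t_0(y)+\lambda t_0(y)$ from both sides of $(\ast)$ and integrating against $dy$, using the volume formula above, yields exactly
$$V_n((1-\lambda)K_1\oplus\lambda K_2)\le(1-\lambda)V_n(K_1)+\lambda V_n(K_2).$$

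The hard part will be the equality case. Equality forces $(\ast)$ to be an equality for almost every $y$ in the bounded projection of $K_1\cup K_2$ onto $H$. An equality in an infimal-convolution-type estimate between two convex functions forces their subdifferentials to agree there, so locally $r_{L_1}$ and $r_{L_2}$ differ by an additive constant $c$, i.e., $L_2=L_1+cv$ locally. Since $r_{L_1}=r_{L_2}=t_0$ off a compact set (each $K_i$ being bounded), the constant must vanish, giving $r_{L_1}\equiv r_{L_2}$, $L_1=L_2$, and hence $K_1=K_2$. The main obstacle is exactly this last step: combining the local equality coming from $(\ast)$ with the global bounded-complement constraint on each $K_i$ to rule out a nontrivial $v$-translation between the two convex sets.
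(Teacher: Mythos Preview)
Your argument is correct and is essentially the paper's approach: the paper reduces to a classical lemma (concavity of the volume of convex bodies inscribed in a common cylinder, proved by Fubini) applied to the truncated caps $\operatorname{cap}_H(K_i)$, whereas you unpack that same Fubini/height-function argument directly for the unbounded complements $L_i$. The equality case is likewise identical in content---both routes show that equality forces the two convex complements to differ by a segment in the $v$-direction, which is then ruled out because $r_{L_1}=r_{L_2}=t_0$ outside a compact set; your worry about this ``obstacle'' is unfounded, since the domain of the $r_{L_i}$ is all of $H$ (as $v\in\mathrm{int}\,C$) and hence the constant $c$ is global, not merely local.
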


The following elementary lemma, together with the fact that $V_n$ is positively homogeneous of degree $n$ (i.e. $V_n(tA)=t^nV_n(A)$ for $t>0$), shows that Theorem~\ref{conv} implies Theorem~\ref{sch}.

\begin{lemma}
Let $f$ be a positive convex function, positively homogeneous of degree $n$. Then $f^{1/n}$ is convex.

Suppose moreover that  $f$ is strictly convex.  If there is $\lambda\in(0,1)$ such that $f^{1/n}((1-\lambda)x+\lambda y))$ equals $(1-\lambda)f^{1/n}(x)+\lambda f^{1/n}(y)$, then there is $\alpha >0$ with $x=\alpha y$.
\end{lemma}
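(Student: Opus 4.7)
The plan is to reduce the convexity of $f^{1/n}$ to the convexity of $f$ by a rescaling trick that absorbs the $n$-th root into the homogeneity. Set $a=f(x)^{1/n}$ and $b=f(y)^{1/n}$; both are positive. Introduce the normalized vectors $x'=x/a$ and $y'=y/b$, which by homogeneity of degree $n$ satisfy $f(x')=f(y')=1$. Define
\[
\mu=\frac{(1-\lambda)a}{(1-\lambda)a+\lambda b}\in(0,1),
\]
so that $1-\mu=\lambda b/((1-\lambda)a+\lambda b)$. Then one checks directly that
\[
(1-\lambda)x+\lambda y=\bigl[(1-\lambda)a+\lambda b\bigr]\bigl(\mu x'+(1-\mu)y'\bigr).
\]

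Now apply $f$ to both sides and use degree-$n$ homogeneity on the right to pull out the scalar factor as an $n$-th power, followed by the convexity of $f$ applied to the inner convex combination:
\[
f\bigl((1-\lambda)x+\lambda y\bigr)=\bigl[(1-\lambda)a+\lambda b\bigr]^{n}f\bigl(\mu x'+(1-\mu)y'\bigr)\leq \bigl[(1-\lambda)a+\lambda b\bigr]^{n}\bigl(\mu f(x')+(1-\mu)f(y')\bigr).
\]
Since $f(x')=f(y')=1$, the bracketed factor on the right equals $1$, and taking $n$-th roots yields
\[
f^{1/n}\bigl((1-\lambda)x+\lambda y\bigr)\leq (1-\lambda)a+\lambda b=(1-\lambda)f^{1/n}(x)+\lambda f^{1/n}(y),
\]
which is the desired convexity of $f^{1/n}$.

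For the equality clause, assume $f$ is strictly convex and that equality holds at some $\lambda\in(0,1)$. Tracing back through the chain above, the only inequality used is the strict convexity step applied at the parameter $\mu\in(0,1)$ to the pair $(x',y')$. Equality there forces $x'=y'$, that is, $x/a=y/b$, whence $x=(a/b)\,y$ with $\alpha=a/b>0$. I do not foresee any real obstacle here: the only subtle point is making sure $a,b>0$ and $\mu\in(0,1)$ so that the strict-convexity hypothesis genuinely applies, and both of these follow at once from the positivity of $f$ and from $\lambda\in(0,1)$.
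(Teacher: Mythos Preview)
Your argument is correct and is precisely the paper's own proof, written out in more detail: normalize $x$ and $y$ so that $f$ takes the value $1$ on them, apply convexity of $f$ to the resulting convex combination, and choose the interpolation parameter to absorb the scalar factor. Your treatment of the equality case is also exactly what the paper leaves implicit.
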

\begin{proof}
For $\bar \lambda\in[0,1]$ and any $x,y$, we have $f((1-\bar \lambda)\frac{x}{f(x)^{1/n}}+\bar\lambda \frac{y}{f(y)^{1/n}})\leq 1$, and the result follows by taking, for any $\lambda\in (0,1)$, $\bar \lambda=
\lambda f(y)^{1/n} / ((1-\lambda)f(x)^{1/n}+\lambda f(y)^{1/n})$.
\end{proof}

Let us prove Theorem~\ref{conv}.

 Let $H$ be an affine hyperplane of $\R^n$ with the following properties: it has an orthogonal 
direction in the interior of $C$,   
 $K_1,K_2$ and the origin are contained in the same half-space $H^+$ bounded by $H$, and  $H\cap C=B$ is compact. For $\lambda\in [0,1]$, let $K_\lambda=(1-\lambda)K_1\oplus \lambda K_2$, which is also  contained in 
$H^+$, and let
$\operatorname{cap}_H(K_\lambda)=H^+ \cap (C\setminus K_\lambda) $, see Figure~\ref{fig:notations}.

Also, the quantity $V_n(K_\lambda)+V_n(\operatorname{cap}_H(K_\lambda))$ does not depend on $\lambda$, as it is equal to $V_n(C\cap H^+)$.
Hence Theorem~\ref{conv} is equivalent to
$$V_n(\operatorname{cap}_H(K_\lambda)) \geq (1-\lambda)V_n(\operatorname{cap}_H(K_1))+\lambda V_n(\operatorname{cap}_H(K_2))~ $$
for $\lambda\in (0,1)$, with  equality  if and only if $K_1=K_2$.

 This last result itself follows from the following elementary result. Here ``elementary'' means that the most involved technique in its proof is  Fubini theorem  (see Chapter~50 in 
\cite{BF87} or Lemma~3.30 in \cite{bf}).

\begin{lemma}\label{lem:vol concave}
Let
 $A_0$ and $A_1$ be two convex bodies in $\R^{n}$ contained in 
$H^+$, 
such that their orthogonal projection 
onto $H$ is $B$.
Then, for $\lambda\in[0,1]$,
$$V_n((1-\lambda)A_0+\lambda A_1)\geq (1-\lambda) V_n(A_0)+\lambda V_n(A_1)~.$$
Equality holds if and only if either
$A_0=A_1+U$ or $A_1=A_0+U$, where $U$  is some segment whose direction is orthogonal to $H$.
\end{lemma}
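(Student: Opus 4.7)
The plan is to choose coordinates so that $H = \R^{n-1}\times\{0\}$ and $H^+ = \R^{n-1}\times[0,\infty)$, and to encode each $A_i$ by a pair of functions on $B$: a convex lower envelope $f_i:B\to[0,\infty)$ and a concave upper envelope $g_i:B\to[0,\infty)$, with $A_i = \{(b,t) : b\in B,\ f_i(b) \leq t \leq g_i(b)\}$. Fubini's theorem then gives $V_n(A_i) = \int_B \ell_i(b)\,db$, where $\ell_i := g_i - f_i$.

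\textbf{Proof of the inequality.} Let $A_\lambda := (1-\lambda)A_0 + \lambda A_1$. Its projection onto $H$ is $(1-\lambda)B + \lambda B = B$, so Fubini again gives $V_n(A_\lambda) = \int_B \ell_\lambda(b)\,db$, where $\ell_\lambda(b)$ denotes the length of the vertical slice of $A_\lambda$ above $b$. For any $b\in B$, taking $b_0 = b_1 = b$ in the Minkowski combination shows that the slice contains $\{(b,\,(1-\lambda)t_0 + \lambda t_1) : t_i\in[f_i(b),g_i(b)]\}$, of length $(1-\lambda)\ell_0(b) + \lambda\ell_1(b)$. Hence $\ell_\lambda \geq (1-\lambda)\ell_0 + \lambda\ell_1$ pointwise, and integrating produces the desired inequality.

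\textbf{Equality case.} The ``easy'' direction is quick: if $A_1 = A_0 + U$ (or $A_0 = A_1 + U$) for some vertical segment $U$, then convexity of $A_0$ gives $A_\lambda = (1-\lambda)A_0 + \lambda(A_0 + U) = A_0 + \lambda U$, whose volume is affine in $\lambda$, forcing equality. For the converse, write $A_\lambda = \{(b,t) : F_\lambda(b)\leq t\leq G_\lambda(b)\}$. Then $G_\lambda$ is the sup-convolution $\sup\{(1-\lambda)g_0(b_0) + \lambda g_1(b_1) : (1-\lambda)b_0 + \lambda b_1 = b\}$ and $F_\lambda$ is the analogous inf-convolution, so $G_\lambda \geq (1-\lambda)g_0 + \lambda g_1$ and $F_\lambda \leq (1-\lambda)f_0 + \lambda f_1$. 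Equality in the lemma forces $\ell_\lambda = (1-\lambda)\ell_0 + \lambda\ell_1$ almost everywhere on $B$, hence throughout $\mathrm{int}(B)$ by continuity of concave functions, which in turn forces both envelope inequalities to be equalities.

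\textbf{Main obstacle: rigidity.} The delicate step is to deduce from $G_\lambda = (1-\lambda)g_0 + \lambda g_1$ that $g_0 - g_1$ is constant on $B$ (and likewise for $f_0,f_1$). Setting $b_0 = b + \lambda v$ and $b_1 = b - (1-\lambda)v$ turns the identity into the statement that $v = 0$ maximizes the concave function $v \mapsto (1-\lambda)g_0(b+\lambda v) + \lambda g_1(b-(1-\lambda)v)$. At any $b \in \mathrm{int}(B)$ where both $g_i$ are differentiable (a dense subset, since concave functions are differentiable almost everywhere), a first-order expansion forces $\nabla g_0(b) = \nabla g_1(b)$, so $g_0 - g_1$ is locally constant on a dense subset of the connected set $\mathrm{int}(B)$, and thus constant on $B$ by continuity. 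The same argument gives $f_1 - f_0$ constant. Assembling these two constants exhibits $A_1$ and $A_0$ as differing by a Minkowski sum with a (possibly degenerate) segment orthogonal to $H$, completing the equality analysis.
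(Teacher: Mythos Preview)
Your argument is correct and is precisely the classical Fubini-based proof that the paper alludes to: the paper does not give its own proof of this lemma but simply cites Chapter~50 of \cite{BF87} and Lemma~3.30 of \cite{bf}, remarking only that ``the most involved technique in its proof is Fubini theorem''. Your representation of each $A_i$ as the region between a convex floor $f_i$ and a concave roof $g_i$ over $B$, followed by the pointwise comparison of vertical slice lengths, reproduces that standard argument, including the sup/inf-convolution description of the envelopes of $A_\lambda$ used in the equality analysis.

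One phrasing in the rigidity step should be tightened. From $\nabla g_0(b)=\nabla g_1(b)$ at each common point of differentiability you conclude that ``$g_0-g_1$ is locally constant on a dense subset'', but vanishing of the gradient \emph{at} a point does not yield local constancy \emph{near} that point. The clean way to finish is to observe that $g_0-g_1$, being a difference of finite concave functions, is locally Lipschitz on $\mathrm{int}(B)$; since you have shown its gradient vanishes almost everywhere, absolute continuity along line segments (the fundamental theorem of calculus for Lipschitz functions) forces it to be constant on the connected open set $\mathrm{int}(B)$, and continuity then extends this to all of $B$. The same applies to $f_0-f_1$. With this adjustment the equality characterization goes through exactly as you wrote it.
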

In our case, if $K$ is a $C$-coconvex body, then $K\oplus U$ is a $C$-coconvex body if and only if $U=\{0\}$.

\begin{figure}
\begin{center}
\psfrag{C}{$C$}
\psfrag{H}{$H$}
\psfrag{H+}{$H^+$}
\psfrag{K}{$K$}
\psfrag{B}{$B$}
\psfrag{ca}{$\operatorname{cap}_H(K)$}
\includegraphics[width=0.5\linewidth]{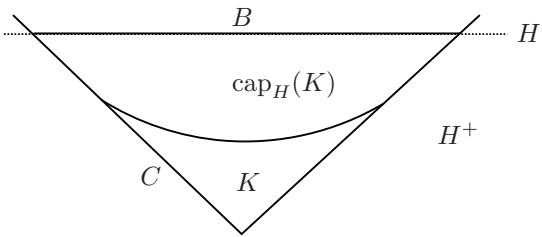}\caption{Notations}\label{fig:notations}
\end{center}
\end{figure}

\begin{remark}{\rm In the classical convex bodies case, the Brunn--Minkowski inequality (saying that the $n$th-root of the volume of convex bodies is concave) follows from the more general result that the volume of convex bodies is log-concave. This is the genuine analogue of our situation, due to the following implications:
\begin{eqnarray*}
 \  f \mbox{ concave} &\Longrightarrow &      f \mbox{ log-concave}  \\
      \  f \mbox{ log convex} &\Longrightarrow &  f \mbox{ convex}~.  
 \end{eqnarray*}
 
 If moreover $f$ is positively homogenous of degree  $n$, we have:     
 \begin{eqnarray*}
  \   f \mbox{ log-concave} &\Longrightarrow &   f^{1/n} \mbox{ concave} \\
   \   f \mbox{ convex } &\Longrightarrow & f^{1/n} \mbox{ convex}~.
\end{eqnarray*}

}\end{remark}

\begin{remark}{\rm
Actually we didn't use the fact that the convex set $C$ is a cone, as the only thing that really matters is the stability of $C$-coconvex bodies under convex combinations. See e.g. \cite{bf} for an application to this more general situation. If $C$ is a cone, the $C$-coconvex bodies are furthermore stable under positive homotheties and $\oplus$, that allows to develop a mixed-volume theory for $C$-coconvex sets, see \cite{Fgafa,KT,schnei2}.

}\end{remark}

\begin{spacing}{0.9}
\begin{footnotesize}
\bibliography{flat}
\bibliographystyle{alpha}
\end{footnotesize}
\end{spacing}

\end{document}